\title{Principal Loading Analysis}
\author{
 Jan O. Bauer
   \And
 Bernhard Drabant  
  \And 
  \\[0.1pt]
   Baden-Wuerttemberg Cooperative State University Mannheim\\ 
 Coblitzallee 1-9\\
 68163 Mannheim, Germany\\
  \texttt{jan.bauer@dhbw-mannheim.de; bernhard.drabant@dhbw-mannheim.de}
}
\newcommand{\bigO}{\mathcal{O}_p}
\newcommand{\R}{\mathbb{R}}
\newcommand{\E}{{\rm E}}
\newcommand{\Var}{{\rm Var}}
\newcommand{\Cov}{{\rm Cov}}
\newcommand{\corr}{{\rm Corr}}
\newcommand{\diag}{{\rm diag}}
\newcommand{\cind}{\overset{\text{c}}{\ind}}
\newcommand{\epsind}{\overset{\varepsilon}{\ind}}
\newcommand{\ind}{\perp \!\!\! \perp}
\newcommand{\PC}{\stackrel{P}{\longrightarrow}}
\newcommand{\Xb}{\bm{X}}
\newcommand{\xb}{\bm{x}}
\newcommand{\vb}{\bm{v}}
\newcommand{\vbhat}{\hat{\bm{v}}}
\newcommand{\vbtilde}{\tilde{\bm{v}}}
\newcommand{\vbtildehat}{\hat{\tilde{\bm{v}}}}
\newcommand{\yb}{\bm{y}}
\newcommand{\Ab}{\bm{A}}
\newcommand{\Vb}{\bm{V}}
\newcommand{\Vbhat}{\hat{\bm{V}}}
\newcommand{\Vbtilde}{\tilde{\bm{V}}}
\newcommand{\Vbtildehat}{\hat{\tilde{\bm{V}}}}
\newcommand{\xibtilde}{\tilde{\bm{\xi}}}
\newcommand{\xitilde}{\tilde{\xi}}
\newcommand{\betab}{\bm{\beta}}
\newcommand{\Sigmab}{\bm{\Sigma}}
\newcommand{\Sigmabhat}{\hat{\bm{\Sigma}}}
\newcommand{\Sigmabtilde}{\tilde{\bm{\Sigma}}}
\newcommand{\Sigmabtildehat}{\hat{\tilde{\bm{\Sigma}}}}
\newcommand{\sigmatilde}{\tilde{\sigma}}
\newcommand{\sigmatildehat}{\hat{\tilde{\sigma}}}
\newcommand{\lambdatilde}{\tilde{\lambda}}
\newcommand{\lambdatildehat}{\hat{\tilde{\lambda}}}
\newcommand{\Lambdabtilde}{\tilde{\bm{\Lambda}}}
\newcommand{\Lambdabtildehat}{\hat{\tilde{\bm{\Lambda}}}}
\newcommand{\Deltab}{\bm{\Delta}}
\newcommand{\lambdahat}{\hat{\lambda}}
\newcommand{\Lambdabhat}{\hat{\bm{\Lambda}}}
\newcommand{\jN}{{j|N}}
\newcommand{\eone}{\textbf{e}_1}
\newcommand{\emm}{\textbf{e}_M}
\newcommand{\jth}{j^{\text{th}}}
\newcommand{\Etab}{\bm{\mathrm{H}}}
\newcommand{\etab}{\bm{\eta}}
\newcommand{\Epsilonb}{\bm{\mathrm{E}}}
\newcommand{\epsilonb}{\bm{\varepsilon}}
\renewcommand{\keywords}[1]{\emph{Keywords:} #1}
\newcommand{\mathsubjclassSingle}[2]{\emph{#1:} #2}
\newtheorem{Definition}{Definition}[section]
\newtheorem{Remark}{Remark}[section]
\newtheorem{Theorem}{Theorem}[section]
\newtheorem{Lemma}{Lemma}[section]
\newtheorem{Algorithm}{Algorithm}[section]
\newtheorem*{assumption*}{\assumptionnumber}
\providecommand{\assumptionnumber}{}
\newenvironment{assumption}[2]
 {%
  \renewcommand{\assumptionnumber}{Assumption #1$^{#2}$}%
  \begin{assumption*}%
  \protected@edef\@currentlabel{#1}%
 }
 {%
  \end{assumption*}
 }
\newcommand{\refNeudecker}{Theorem 6 in \citep{KN93}}
\newcommand{\refWessel}{Theorem 1 in \citep{NW90}}
\numberwithin{equation}{section} 
\begin{document}
\maketitle
\begin{abstract}
This paper proposes a tool for dimension reduction where the dimension of the original space is reduced: the principal loading analysis. principal loading analysis is a tool to reduce dimensions by discarding variables. The intuition is that variables are dropped which distort the covariance matrix only by a little. Our method is introduced and an algorithm for conducting principal loading analysis is provided. Further, we give bounds for the noise arising in the sample case.
\\[12pt]
\keywords{Component Loading, Dimensionality Reduction, Matrix Perturbation Theory, Principal Component Analysis}
\\[12pt]
\mathsubjclassSingle{2010 Mathematics Subject Classification}{62H25, 62J10, 15A18, 15A42, 65F15}
\end{abstract}

\section{Introduction}
\label{s:Introduction}
\noindent
When data is of high dimension it is often beneficial to reduce dimension. Two ways of reducing dimension exist: either by transforming the variables to a reduced set of variables or by selecting a subset of the existing variables \citep{CC08,MPH09}. We propose a new approach pursuing the latter one despite adopting ideas from principal component analysis (PCA). \\
Classical PCA was formulated by \citep{PE01,HO33} and has been extended over the years while still being an active field of research. The basic goal is to transform a set of variables into a subspace spanned by orthogonal variables containing most of its original variance (see for example \citep{JO02} for an overview). One extension to a non-linear approach using kernels was given by \citep{SSM98}, to the so called kernel PCA. Further, \citep{ZHT06} proposed a sparse principal component analysis (sparse PCA) to reduce the dimension as well as the number of used variables. This is done by implementing a further restriction in the underlying maximization problem. Regarding the purpose of discarding variables, \citep{JO72,JO73} presented four methods and provided examples for artificial as well as real data respectively. The methods are based on the idea to select variables that are highly present in the largest eigenvectors, or to discard variables that are highly present in the smallest eigenvectors.\footnote{\emph{Note:} We will denote the eigenvectors associated with the largest eigenvalues as \textit{"largest eigenvectors"}. The intuition behind \textit{"smallest eigenvectors"} is analogue.} In a regression context, \citep{BM94} propose an iterative method to select the covariates using PCA and \citep{MW77} to discard variables considering the respective increase in the residual sum of squares.\\
\citep{KN93} provided a broad overview of existing theories regarding the asymptotics of eigenvectors and eigenvalues of sample correlation and covariance matrices respectively. \citep{DPR82} pointed out that the assumption of simple eigenvalues is needed because the orthonormal basis of the eigenmanifold of eigenvalues with a multiplicity greater than one can be obtained by rotation hence asymptotics for the corresponding eigenvalues are problematic to obtain.\\
However, to our knowledge, there has been no effort in developing a technique where dimension is reduced by selecting a subset of the observed variables based on non-impact in the eigenvectors. Our main contribution is a method as such: the principal loading analysis (PLA). We investigate the underlying form of the sample covariance matrix needed to conduct PLA where we take the presence of perturbations caused by a small sample size and due to the fuzziness of PLA into account. We further provide Algorithms to conduct PLA in practice\\  
The rest of the paper is organised as follows: Section~\ref{s:setup} provides assumptions needed for the remainder of this work. In Section~\ref{s:Metho}, we recap the methodology of PCA and explain the idea of PLA. Our focus lies on Section~\ref{s:BlockPLA}, where we introduce the PLA method and the underlying covariance matrix structure needed for PLA. We will provide bounds for the sample counterparts essential for PLA: for the sample covariance, sample eigenvectors and sample eigenvalues. We suggest using a cut-off threshold for application and Section~\ref{s:Decision} complements this step of PLA. We give recommendations for threshold values in Section~\ref{s:simulationstudy} based on simulation studies. We give simulated examples in Section~\ref{s:DataAnalysis} and take a resume as well as suggest extensions in Section~\ref{s:Conclusion}.
\section{Setup}
\label{s:setup}
\noindent
We first state some notation, assumptions and lemmas used throughout this work. We consider $\xb = \begin{pmatrix} \xb_1 & \cdots & \xb_M \end{pmatrix} \in \R^{N\times M}$ to be an independent and identically distributed (IID) sample containing $n \in\{ 1,\ldots,N\}$ observations of a random vector $\Xb = (X_1 \;\cdots\; X_M)$ with covariance matrix $\Sigmab = (\sigma_{i,j})$ for some $i,j\in\{1,\ldots,M\}$. We also consider the case when the covariance matrix is slightly perturbed with
$$\Sigmabtilde \equiv \Sigmab + \Epsilonb  $$
where $\Epsilonb = (\varepsilon_{i,j})$ is a sparse matrix. $\Epsilonb$ is a technical construction and contains small components we want to extract from $\Sigmab$. Hence, $\varepsilon_{i,j}\neq0\Rightarrow\sigma_{i,j}=0$. The sample counterpart $\Sigmabtildehat$ is of the form
$$\Sigmabtildehat \equiv (\sigmatildehat_{i,j}) \equiv \Sigmab + \Epsilonb  + \Etab_N $$
where $\Etab_N = (\eta_{i,j|N})$ is a perturbation in the form of a random noise matrix. The noise is due to having only a finite number of observations in the sample. We consider the eigendecomposition of $\Sigmabtilde$ to be given by 
\begin{equation}
\label{eq:Eigendec}
\Sigmabtilde \equiv \Vbtilde\Lambdabtilde\Vbtilde^\top
\end{equation}
with $\Vbtilde^\top\Vbtilde=\bm{I}$, $\Lambdabtilde = \diag(\lambdatilde_1,\ldots,\lambdatilde_M)$ and $\lambdatilde_1\geq\ldots\geq\lambdatilde_M$. The eigenvectors $\Vbtilde = \begin{pmatrix} \vbtilde_1  & \cdots & \vbtilde_M\end{pmatrix}$ are ordered according to the respective eigenvalues. The eigendecomposition for $\Sigmabtildehat$ is denoted analogously. The corresponding sample eigenvalues and eigenvectors are given by
$$  \lambdatildehat_j \equiv \lambda_j + \varepsilon_j^\lambda + \eta_j^\lambda  $$
and
$$\vbtildehat_j \equiv \vb_j + \epsilonb_{j} + \etab_{j|N} = \begin{pmatrix} v^{(1)}_{j} \\ \vdots \\ v^{(M)}_{j} \end{pmatrix} + \begin{pmatrix} \varepsilon_{j|N}^{(1)} \\ \vdots \\ \varepsilon_{j|N}^{(M)} \end{pmatrix} + \begin{pmatrix} \eta_{j|N}^{(1)} \\ \vdots \\ \eta_{j|N}^{(M)} \end{pmatrix}  $$
respectively for some $j\in\{1,\ldots,M\}$. For a vector $\vb = (v^{(1)} \;\cdots\; v^{(M)})^\top\in\R^M$ and for $0<p<\infty$, we use the $\mathcal{L}_0$, $\mathcal{L}_p$ and $\mathcal{L}_\infty$ vector norms as $\| \vb \|_0 \equiv |\{m:v^{(m)} \neq 0 \}|$, $\| \vb \|_p \equiv \left( \sum_{m} |v^{(m)}|^p\right)^{1/p}$ and $\|\vb\|_\infty \equiv \max_m|v^{(m)}|$ respectively. For a matrix $\Ab$, $\|\Ab\|_F$ denotes the Frobenius norm. "$\PC$" denotes convergence in probability where we always consider the limit for $N\to\infty$. $\bigO$ denotes stochastic boundedness and $\vb = \bigO(\cdot) \Leftrightarrow \|\vb\|_\infty = \bigO(\cdot)$.\footnote{We will not restate the definition of stochastic boundedness explicitly in this work but refer to common textbooks such as \citep{BFH07} Definition 14.4-3 and 14.4-4.} When two blocks of random variables $X_{i_1},\ldots,X_{i_I}$ and $X_{j_1},\ldots,X_{j_J}$ are uncorrelated, we write $(X_{i_1},\ldots,X_{i_I})\cind (X_{j_1},\ldots,X_{j_J})$. Due to the sparse perturbation $\Epsilonb$, we introduce the following definition:
\begin{Definition}
We say that two blocks of random variables $X_{i_1},\ldots,X_{i_I}$ and $X_{j_1},\ldots,X_{j_J}$ are $\varepsilon$-uncorrelated if $ \sigmatilde_{i_{\bar{i}},j_{\bar{j}}} = \varepsilon_{i_{\bar{i}},j_{\bar{j}}}$ for $\bar{i}\in\{1,\ldots,I\}$ and $\bar{j}\in\{1,\ldots,J\}$. We then write $(X_{i_1},\ldots,X_{i_I})\epsind (X_{j_1},\ldots,X_{j_J})$.
\end{Definition}\noindent
Given the notation above, the following assumption are made for $i,j\in\{1,\ldots,M\}$:
\begin{assumption}{1}{}
\label{ass:1}
$\Sigmabtilde$ has distinct eigenvalues $\lambdatilde_1 > \ldots > \lambdatilde_M$.
\end{assumption}\noindent
Assumption~\ref{ass:1} is needed to apply \refNeudecker. In presence of eigenvalues with an algebraic order larger than one it is problematic to obtain asymptotic results since the corresponding orthonormal basis of the eigenmanifold can be obtained by rotation. Hence we rule out this case for $\Sigmab$ in Assumption~\ref{ass:1}. We refer to \citep{KN93} for an detailed overview of asymptotic results and to \citep{DPR82} for a more elaborate explanation why an algebraic order of one is needed for the eigenvalues of concern.
\begin{assumption}{2}{}
\label{ass:2}
$\E\lbrace (\bm{X} - \E\bm{X})^4 \rbrace < \infty$.
\end{assumption}\noindent
We assume $\Xb$ to be non-explosive.
The following two lemmata provide the methodology for the noise. $\Sigmabtildehat$ is distorted by noise but provides consistent estimators in the limit. The intuition behind this is trivial: we can not assume that our sample represents $\Xb$ perfectly in the first place. The deviation between the sample values and the population values is described by the noise terms. However, the deviation vanishes when the sample size increases.
\begin{Lemma}\label{l:1}
For $i,j \in\{1,\ldots,M\}$ it holds that $\eta_{i,j|N} = \bigO\left(1/\sqrt{N}\right)$
\end{Lemma}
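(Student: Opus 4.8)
The plan is to recognise $\eta_{i,j|N}$ as the pure sampling error of the empirical covariance and to control it by a central limit argument. Since $\Epsilonb$ is a deterministic construction, it enters both $\Sigmabtilde$ and $\Sigmabtildehat$ identically, so that $\eta_{i,j|N} = \sigmatildehat_{i,j} - \sigmatilde_{i,j} = \sigmahat_{i,j} - \sigma_{i,j}$, where $\sigmahat_{i,j} = \frac{1}{N}\sum_{n=1}^N (x_{n,i} - \bar{x}_i)(x_{n,j} - \bar{x}_j)$ denotes the ordinary sample covariance of the $N$ observations (the $\varepsilon_{i,j}$ term cancels, and the choice of divisor $N$ versus $N-1$ affects only lower-order terms). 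It therefore suffices to show $\sqrt{N}\,(\sigmahat_{i,j} - \sigma_{i,j}) = \bigO(1)$.

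First I would re-centre the sample covariance around the population means $\mu_i = \E X_i$ rather than the sample means. A direct expansion, using $\sum_n (x_{n,i}-\mu_i) = N(\bar{x}_i-\mu_i)$, gives
\begin{equation}
\sigmahat_{i,j} = \frac{1}{N}\sum_{n=1}^N (x_{n,i} - \mu_i)(x_{n,j} - \mu_j) - (\bar{x}_i - \mu_i)(\bar{x}_j - \mu_j),
\end{equation}
so the sampling error splits into a centred average and a mean-correction term, which I would treat separately.

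For the first term I would apply the central limit theorem to the IID summands $Z_n \equiv (x_{n,i} - \mu_i)(x_{n,j} - \mu_j)$, which have mean $\sigma_{i,j}$. Assumption~\ref{ass:2} is exactly what is needed here: finite fourth moments of $\Xb$ imply, via the Cauchy--Schwarz inequality, that $\Var(Z_n) \leq \E\{(X_i - \mu_i)^2 (X_j - \mu_j)^2\} < \infty$, so the CLT applies and
\begin{equation}
\sqrt{N}\left(\frac{1}{N}\sum_{n=1}^N Z_n - \sigma_{i,j}\right) \DC \mathcal{N}\!\left(0,\,\Var(Z_n)\right).
\end{equation}
Convergence in distribution to a tight normal limit yields stochastic boundedness, so this term is $\bigO(1)$.

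It then remains to show the mean-correction term is of smaller order. By the CLT applied coordinate-wise, each factor satisfies $\bar{x}_i - \mu_i = \bigO(1/\sqrt{N})$ and $\bar{x}_j - \mu_j = \bigO(1/\sqrt{N})$, so their product is $\bigO(1/N)$ and hence $\sqrt{N}\,(\bar{x}_i - \mu_i)(\bar{x}_j - \mu_j) = \smallO(1)$, which is absorbed. Combining the two contributions gives $\sqrt{N}\,(\sigmahat_{i,j} - \sigma_{i,j}) = \bigO(1)$, i.e. $\eta_{i,j|N} = \bigO(1/\sqrt{N})$, as claimed. The argument is essentially routine; the only points requiring care are verifying that Assumption~\ref{ass:2} delivers the finite variance of the product $Z_n$ that the CLT needs, and checking that replacing the population mean by the sample mean costs only a lower-order $\bigO(1/N)$ term rather than disturbing the $1/\sqrt{N}$ rate.
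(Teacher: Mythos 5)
Your proof is correct, but it takes a different (more self-contained) route than the paper. The paper's own proof is a one-line citation: it declares the bound ``an immediate result'' of Theorem 1 in Neudecker and Wesselman (1990), which establishes asymptotic normality of $\sqrt{N}\,(\Sigmabhat - \Sigmab)$ under finite fourth moments, and then reads off stochastic boundedness from convergence in distribution. You instead reprove the relevant tightness statement from first principles: you correctly observe that $\Epsilonb$ cancels so that $\eta_{i,j|N}$ is the ordinary covariance sampling error, re-centre around the population means, apply the CLT to the IID products $Z_n$ (with Assumption~\ref{ass:2} plus Cauchy--Schwarz supplying the finite variance of $Z_n$), and dispose of the mean-correction term as $\bigO(1/N)$. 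This is essentially the standard argument underlying the cited theorem, restricted to the entrywise tightness that the lemma actually needs; what your version buys is transparency about exactly where the fourth-moment assumption enters, at the cost of redoing work the paper delegates to the literature. All steps check out, including the algebraic identity for the re-centred sample covariance and the order of the correction term.
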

\begin{proof}
$\eta_{i,j|N} = \bigO\left(1/\sqrt{N}\right)$ is an immediate result of \refWessel.
\end{proof}\noindent
We then can further provide bounds for the sample eigenvalues:
\begin{Lemma} \label{l:2}
For $j \in\{1,\ldots,M\}$ it holds that $\eta_{j|N}^\lambda = \bigO\left(1/N^w\right)$, $w \in[\tfrac{1}{2},\infty)$.
\end{Lemma}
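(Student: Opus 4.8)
The plan is to read $\eta_{j|N}^\lambda$ as the change in the $j$-th eigenvalue produced by the random noise matrix $\Etab_N$. Writing $\Sigmabtildehat = \Sigmabtilde + \Etab_N$ and using the decomposition $\lambdatildehat_j = \lambda_j + \varepsilon_j^\lambda + \eta_j^\lambda$ together with $\lambdatilde_j = \lambda_j + \varepsilon_j^\lambda$, we have $\eta_{j|N}^\lambda = \lambdatildehat_j - \lambdatilde_j$, i.e. the gap between the $j$-th eigenvalue of $\Sigmabtildehat$ and the $j$-th eigenvalue of $\Sigmabtilde$. By Assumption~\ref{ass:1} the eigenvalues of $\Sigmabtilde$ are simple, so once the perturbation is small this pairing is unambiguous.

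First I would pass from the entrywise control of Lemma~\ref{l:1} to a norm bound. Each entry obeys $\eta_{i,j|N} = \bigO(1/\sqrt N)$, and because $\Etab_N$ is symmetric of fixed finite order $M$, summing the $M^2$ squared entries and taking the square root gives $\|\Etab_N\|_F = \bigO(1/\sqrt N)$. Since $\Sigmabtilde$ and $\Etab_N$ are symmetric, Weyl's inequality bounds $|\lambdatildehat_j - \lambdatilde_j|$ by the spectral norm of $\Etab_N$ and hence by $\|\Etab_N\|_F$ for every $j$. Combining the two bounds yields $\eta_{j|N}^\lambda = \bigO(1/\sqrt N)$, which is exactly the claim with the boundary value $w = \tfrac12 \in [\tfrac12,\infty)$.

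It remains to explain why $w$ may exceed $\tfrac12$. Here I would invoke the first-order eigenvalue expansion underlying \refNeudecker, namely $\lambdatildehat_j = \lambdatilde_j + \vbtilde_j^\top \Etab_N \vbtilde_j + \smallO(\|\Etab_N\|_F)$, whose validity for simple eigenvalues is again secured by Assumption~\ref{ass:1} and whose $\sqrt N$-asymptotics rest on the finite fourth moments of Assumption~\ref{ass:2}. Generically the leading bilinear form is $\bigO(1/\sqrt N)$ and nonvanishing, forcing $w = \tfrac12$; but in degenerate configurations where $\vbtilde_j^\top \Etab_N \vbtilde_j$ collapses, the rate is set by the higher-order terms and is correspondingly faster, which is precisely what the range $w \in [\tfrac12,\infty)$ records.

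I expect the one point needing care to be the eigenvalue matching: Weyl's inequality compares the two spectra in common sorted order, so I must check that the sorted sample eigenvalue $\lambdatildehat_j$ is eventually the counterpart of $\lambdatilde_j$. This follows because the eigenvalues of $\Sigmabtilde$ are distinct and separated by fixed gaps while the perturbation is $\bigO(1/\sqrt N) \PC 0$, so for $N$ large the ordering is preserved and the identification is consistent with the stated decomposition of $\lambdatildehat_j$. The remaining manipulations — the passage from the entrywise to the Frobenius bound and from Weyl's inequality to the stated rate — are routine.
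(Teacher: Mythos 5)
Your argument is correct and follows essentially the same route as the paper: Weyl's inequality applied to the symmetric perturbation $\Etab_N$, combined with the entrywise rate of Lemma~\ref{l:1} lifted to a Frobenius-norm bound (valid since $M$ is fixed), gives $|\lambdatildehat_j - \lambdatilde_j| \leq \|\Etab_N\|_F = \bigO(1/\sqrt{N})$. Your additional remarks on the eigenvalue pairing and on when $w$ can exceed $\tfrac{1}{2}$ are sound elaborations that the paper leaves implicit, but the core of the proof is identical.
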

\begin{proof}
From Weyl's inequality (Corollary 4.10 in \citep{SS90}) and Lemma~\ref{l:1} we can conclude that $\max_{j} | \hat{\tilde{\lambda}}_j - \tilde{\lambda}_j |  \leq \| \Etab_N \|_F = \bigO\left(1/\sqrt{N}\right)$ hence $\eta_{j|N}^\lambda = \bigO\left(1/N^w\right)$ with $w \in[\tfrac{1}{2},\infty)$.
\end{proof}
\section{Background and Methodology}
\label{s:Metho}
\noindent
Firstly, we recap PCA in this section because some steps of PCA correspond with PLA. Afterwards, we introduce the intuition of PLA which is complemented by its applied parts in Section~\ref{s:BlockPLA}.\\
PCA is a tool for dimension reduction while containing most of the variance within the data. Every observation $n$ is projected into a $K$-dimensional subspace, with $K \leq M$, spanned by the eigenvectors of the covariance matrix denoted loadings. The projected observations then are represented by the so called PCs. Clearly, it is most appealing when $K \ll M$ and $K \leq 3$ for graphic purposes. The underlying decomposition is the eigendecomposition $\Sigmabtilde = \Vbtilde\Lambdabtilde \Vbtilde^\top$ in (\ref{eq:Eigendec}). The identity of PCA is then based on
$$\Xb = \Vbtilde \xibtilde = \Vbtilde_{K} \xibtilde_{K} + \Vbtilde_{M-K}\xibtilde_{M-K}  \; , $$
see \citep{MO12} among others. Here, $\xibtilde = \begin{pmatrix} \xitilde^{(1)} & \cdots &  \xitilde^{(M)} \end{pmatrix}^\top$ is the vector of PCs. $\xibtilde_K =\begin{pmatrix} \xitilde^{(1)} & \cdots &  \xitilde^{(K)} \end{pmatrix}^\top$ denotes the PCs corresponding to the $K$ largest eigenvectors and $\xibtilde_{M-K} =\begin{pmatrix} \xitilde^{(K+1)} & \cdots &  \xitilde^{(M)} \end{pmatrix}^\top$ denotes the PCs corresponding to the $M-K$ smallest eigenvectors respectively. The notation for $\Vbtilde_{K} = \begin{pmatrix} \vbtilde_1  & \cdots & \vbtilde_K\end{pmatrix}$ and $\Vbtilde_{M-K}$ goes in an analogue manner. \\
Considering the geometric shape of a covariance matrix, the components of the eigenvectors reflect the distortion in each dimension. When the, say, $\jth$ component of an eigenvector contains a small value, we can conclude that the $\jth$ variable affects the observations projected into a subspace containing this very eigenvector only a little (if at all) for this particular eigenvector-axis. Further, the importance of each eigenvector-axis is given by the size of the corresponding eigenvalue. This is due to the eigendecomposition
$$ \Var\left(\xitilde^{(j)}\right) = \Var\left(\vbtilde_{j}^\top\Xb \right) = \vbtilde_{j}^\top \Sigmabtilde \vbtilde_{j} = {\lambdatilde}_j \; . $$
Hence, the explained variance by the first $K$ PCs is percentaged given by
\begin{equation}
\label{eq:expvar} \dfrac{ \sum_{m=1}^K \lambdatilde_m }{ \sum_{m=1}^M \lambdatilde_m }\;\; .
\end{equation}
\noindent
When the $\jth$ component of each of the eigenvectors (except for one) is small, the $\jth$ variable affects the projection into the subspace containing most of the variance marginally. Our motivation for PLA is then to discard this very variable instead of projecting all observations into the subspace spanned by the $K$ largest eigenvectors also containing the $\jth$ variable. Therefore, PLA is a tool to detect variables that do not account for any or only for little distortion of the variance and to discard them. \\
PLA proceeds as follows: Assuming that the data lies in the Euclidean space spanned by the unit vectors $\eone,\ldots,\emm$ and that the rows $j_1,\ldots,j_{M^\ast}$ of $M-M^\ast$ eigenvectors contain only values below a threshold (in absolute terms) hence do not distort the covariance matrix, we consider to discard the corresponding random variables $X_{j_1},\ldots,X_{j_{M^\ast}}$ hence transform the data into a subspace spanned by $\textbf{e}_{j}$ with $j\neq j_1,\ldots,j_{M^\ast}$. Therefore, we do not merge the original variables by transforming them into a subspace spanned by the different eigenvectors as done in PCA but rather contain the original variables except the discarded one(s). However, the size of the eigenvalues corresponding to the $M-M^\ast$ eigenvectors has to be considered according to (\ref{eq:expvar}) since the $X_{j_1},\ldots,X_{j_{M^\ast}}$ might explain a fair proportion of the variance. \\
Note that we link variables to eigenvectors and their corresponding eigenvalues hence link a set of variables to a set of eigenvectors with corresponding eigenvalues. Keeping this in mind, a further interpretation of $\Epsilonb$ is outlined in the following remark.
\begin{Remark}
The eigenvalues can be interpreted as estimators of the explained variance biased by $\Epsilonb$ since
$$ \max\limits_j  | \lambdatildehat_j - \lambda_j |  \stackrel{\text{Weyl}}{\leq} \| \Epsilonb + \Etab_N \|_F \PC  \| \Epsilonb \|_F \; $$
where convergence in probability results from Lemma~\ref{l:1} and the Continuous Mapping Theorem. The bias is, however, small due to the sparseness of $\Epsilonb$ and vanishes when the variables are uncorrelated.
\end{Remark}
\section{Principal Loading Analysis}
\label{s:BlockPLA}
\noindent
In this section we provide an explicit algorithm to execute PLA and we investigate the underlying structure of the eigendecomposition of the sample covariance matrix needed to conduct PLA when the sample covariance matrix is of the form $\Sigmabtildehat = \Sigmab + \Epsilonb + \Etab_N$ meaning in the presence of sparse perturbation and noise. We propose that variables can be discarded when small components occur in the population eigenvectors because small entries reflect that the corresponding variable does not distort the covariance matrix in this very direction. In fact, we do look for $\varepsilon$-uncorrelated blocks of variables which only deform towards a few dimensions. However, those variables corresponding to the blocks are only discarded if the sum of the respective eigenvalues is small hence explains only little of the overall variance within the covariance matrix. Considering $\Epsilonb = \bm{0}$, non-zero components of concern can be due to noise within the sample or if the components are in fact different from zero. To get an intuition, we provide bounds for the relevant eigenvectors which have to hold under PLA, as well as bounds for the eigenvalues reflecting the explained variance.\\
To get a first intuition, we start by consider $\Epsilonb = \bm{0}$ again. It holds that variables cause zero components in the eigenvectors when the variables are uncorrelated in blocks, in a way that
\begin{equation}
\label{eq:blocks}
\underbrace{ (X_1,\ldots,X_{M_1}) }_{\kappa_1\text{-many}} \cind  (X_{M_1+1},\ldots,X_{M_2})   \cind \ldots \cind \underbrace{  (X_{M_{L-1}+1},\ldots,X_{M_L})  }_{\kappa_L\text{-many}}
\end{equation}
for an arbitrary $L>1$. Considering $\Epsilonb \neq \bm{0}$, $\Epsilonb$ captures small correlations among the random variables which results in a covariance matrix of the form
$$ \Sigmabtildehat = \Sigmab + \Epsilonb + \Etab_N = \diag(\Sigmab_1,\ldots,\Sigmab_L)  + \Epsilonb + \Etab_N \;.$$
We can assume that the covariance matrix behaves in this convenient way because we can always obtain this structure using a permutation matrix. Therefore, (\ref{eq:blocks}) with $\Epsilonb \neq \bm{0}$ is equivalent to
$$\underbrace{\Sigmabtilde_1}_{\kappa_1\times\kappa_1}  \epsind   \ldots \epsind \underbrace{\Sigmabtilde_L}_{\kappa_L\times\kappa_L} \; .$$
Due to the block-structure, the eigenvectors of $\Sigmab$ are of shape
\begin{equation}
\label{eq:blockeigenstructure}\underbrace{\begin{pmatrix}  \bm{\ast}_{\kappa_{1}} \\ \bm{0} \end{pmatrix},\ldots,\begin{pmatrix}  \bm{\ast}_{\kappa_{1}} \\ \bm{0} \end{pmatrix}}_{\kappa_{1}\text{-many}}  ,    
\underbrace{\begin{pmatrix} \bm{0} \\ \bm{\ast}_{\kappa_{2}} \\ \bm{0} \end{pmatrix},\ldots,\begin{pmatrix} \bm{0} \\ \bm{\ast}_{\kappa_{2}}\\ \bm{0} \end{pmatrix}}_{\kappa_{2}\text{-many}}  , \ldots , 
\underbrace{\begin{pmatrix} \bm{0} \\ \bm{\ast}_{\kappa_{L}}  \end{pmatrix},\ldots,\begin{pmatrix} \bm{0} \\ \bm{\ast}_{\kappa_{L}} \end{pmatrix}}_{\kappa_{L}\text{-many}}  \;\;, 
\end{equation}
where $\bm{\ast}_{\kappa_\ell}$ with $\ell\in\{1,\ldots,L\}$ are vectors of length $\kappa_\ell$ and $\bm{0}$ are vectors of suitable dimension containing zeros. The first $\kappa_{b_1}$ eigenvectors have (at least) $M-\kappa_{b_1}$ zero-components, the following $\kappa_{b_2}$ eigenvectors have (at least) $M-\kappa_{b_2}$ zero-components and so on. The eigenvectors of $\Sigmabtildehat$ follow the same shape however being slightly perturbed due to $\Epsilonb$ and distorted by the noise $\Etab_N$.\\
\noindent
PLA for discarding, say, $A$ blocks $\Sigmab_{b_1},\ldots,\Sigmab_{b_A}$ with $\Sigmab_{b_a} \cind \Sigmab_{b_l}$ $\forall l \neq a$ for $a\in\{1,\ldots,A\}$ is provided in the following algorithm:
\begin{Algorithm}[\textbf{PLA}]\label{alg:PLAcorrBlock}
Discard the variables corresponding to $\Sigmab_{b_1},\ldots,\Sigmab_{b_A}$ according to PLA proceeds as follows:
\begin{enumerate}
\item[1.] Check if the eigenvectors of $\Sigmab$ satisfy the required structure in (\ref{eq:blockeigenstructure}) to discard $\Sigmab_{b_1} , \ldots , \Sigmab_{b_A}$.

\item[2.] Decide if $\Sigmab_{b_1} , \ldots , \Sigmab_{b_A}$ are relevant according to the explained variance $(\ref{eq:expvar})$ of the realisations $\xb_j$ of their contained random variables $X_j$ by calculating
\begin{equation}
\label{eq:algPLAcorrBlocksExpVar}
\dfrac{\sum_j\hat{\tilde{\lambda}}_j}{ \sum_{m} \hat{\tilde{\lambda}}_m } \;\;\; , 
\end{equation}
where $m\in\{1,\ldots,M\}$ and $j$ indexes all $X_j$ contained in $\Sigmab_{b_1},\ldots,\Sigmab_{b_A}$. 

\item[3.] Discard $\Sigmab_{b_1} , \ldots , \Sigmab_{b_A}$.
\end{enumerate}
\end{Algorithm}\noindent
In the spirit of PCA, during the first step of Algorithm~\ref{alg:PLAcorrBlock} we check if all relevant components of an eigenvector are below a given threshold $\tau$ in absolute terms. The purposes of this cut-off value is twofold: we want to detect the required structure despite the presence of noise and, due to the fuzziness of $\tau$, we detect the $\varepsilon$-uncorrelated variables. In Section~\ref{s:simulationstudy} we provide recommendations for $\tau$. The convergence rate for the sample eigenvectors and sample eigenvalues are provided in the following theorem. 
\begin{Theorem}
\label{th:BlockPLAcorrEps}
Let Assumption \ref{ass:1} and \ref{ass:2} hold. When dropping the variables corresponding to $\Sigmab_{b_a}$ according to PLA it holds that there are $\kappa_{b_a}$ eigenvectors of $\Sigmabhat$ of the form
$$ \vbtildehat_{j} =  \vb_{j} + \epsilonb_{j} + \etab_{j|N}  =  \begin{pmatrix}
\bm{0} &  v_{j}^{(M_{a-1})} & \cdots & v_{j}^{(M_a)}  & \bm{0}
\end{pmatrix}^\top + \epsilonb_{j}  +\etab_{j|N}$$
for $j\in\{1,\ldots,\kappa_{b_a}\}$ with corresponding eigenvalues
$$ \lambdatildehat_{j} = \lambdatilde_j + \eta_{j|N}^\lambda \;. $$
Then it holds that
$$\eta_{j|N}^\lambda = \bigO\left( \frac{1}{N^w} \right)  \;\text{ and }\; \etab_{j|N}  =\bigO\left( \frac{1}{\sqrt{N}} \right) \;,\; w \in[\tfrac{1}{2},\infty) \;.$$
\end{Theorem}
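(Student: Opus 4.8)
The plan is to treat the two rates separately, because the eigenvalue bound is already available from the preceding lemmas while the eigenvector bound carries the substantive content. For the eigenvalues, note first that the zero pattern displayed for $\vb_j$ is not something to be proved here: it is exactly the block structure recorded in~(\ref{eq:blockeigenstructure}), and Assumption~\ref{ass:1} guarantees that the $\kappa_{b_a}$ eigenvalues attached to the block $\Sigmab_{b_a}$ are simple, so that for large $N$ the indexing $j\in\{1,\ldots,\kappa_{b_a}\}$ matches sample to population eigenpairs unambiguously. Each of these sample eigenvalues is of the form $\lambdatildehat_j=\lambdatilde_j+\eta_{j|N}^\lambda$, so Lemma~\ref{l:2} applies verbatim and delivers $\eta_{j|N}^\lambda=\bigO(1/N^w)$ with $w\in[\tfrac12,\infty)$. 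Nothing further is required for the eigenvalue statement.

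For the eigenvectors, the decisive observation is that the noise term isolates a single perturbation. Writing $\vbtilde_j=\vb_j+\epsilonb_j$ for the eigenvector of $\Sigmabtilde=\Sigmab+\Epsilonb$, we have $\etab_{j|N}=\vbtildehat_j-\vbtilde_j$; that is, $\etab_{j|N}$ is precisely the displacement of the $j$th eigenvector when the deterministic matrix $\Sigmabtilde$ is perturbed to $\Sigmabtildehat=\Sigmabtilde+\Etab_N$. I would establish the rate by invoking \refNeudecker, whose hypotheses --- simple eigenvalues and a finite fourth moment --- are exactly Assumptions~\ref{ass:1} and~\ref{ass:2}: it supplies the limiting distribution of $\sqrt{N}\,\etab_{j|N}$, and convergence in distribution forces tightness, hence $\etab_{j|N}=\bigO(1/\sqrt{N})$. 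The mechanism that makes this transparent is the first-order perturbation expansion for symmetric matrices, which under Assumption~\ref{ass:1} reads
\begin{equation*}
\etab_{j|N}=\sum_{k\neq j}\frac{\vbtilde_k^\top\Etab_N\vbtilde_j}{\lambdatilde_j-\lambdatilde_k}\,\vbtilde_k+\text{(remainder)}.
\end{equation*}
Every numerator $\vbtilde_k^\top\Etab_N\vbtilde_j$ is a fixed bounded-coefficient linear combination of the entries of $\Etab_N$, each of which is $\bigO(1/\sqrt{N})$ by Lemma~\ref{l:1}, while Assumption~\ref{ass:1} keeps each gap $\lambdatilde_j-\lambdatilde_k$ bounded away from zero; the leading term is therefore $\bigO(1/\sqrt{N})$.

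I expect the main obstacle to lie not in the leading term but in controlling the remainder of that expansion. The leading term is linear in $\Etab_N$ and is dispatched immediately by Lemma~\ref{l:1}, whereas the remainder is quadratic and higher in the entries of $\Etab_N$ and must be shown to be at most $\bigO(1/\sqrt{N})$ (indeed $\smallO(1/\sqrt{N})$). This is exactly where both assumptions are needed together: Assumption~\ref{ass:2} guarantees that $\Etab_N$ is genuinely entrywise $\bigO(1/\sqrt{N})$ and that the sample covariance obeys the central-limit behaviour underlying Lemma~\ref{l:1}, while Assumption~\ref{ass:1} bounds the spectral gaps away from zero --- and since the remainder is governed by inverse powers of those same gaps, a shrinking gap would be the only thing that could spoil the rate. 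The cleanest route is to let \refNeudecker\ absorb the remainder wholesale, so that the entire proof reduces to matching its hypotheses to Assumptions~\ref{ass:1} and~\ref{ass:2} and reading off the two stated rates.
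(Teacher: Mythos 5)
Your proposal is correct and follows essentially the same route as the paper: the eigenvalue rate is read off from Lemma~\ref{l:2}, and the eigenvector rate is obtained by letting \refNeudecker\ (asymptotic normality of $\sqrt{N}\,\etab_{j|N}$, hence tightness) do the work under Assumptions~\ref{ass:1} and~\ref{ass:2}. The perturbation-expansion discussion you add is sound motivation but is not needed once the cited theorem is invoked, which is exactly how the paper proceeds.
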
\noindent
\begin{proof}
$ \eta_{j_a|N}^\lambda = \bigO\left( 1/N^w\right)$ with $w\in[\tfrac{1}{2},\infty)$ is due to Lemma~\ref{l:2} and $\etab_{j|N}=\bigO\left(1/\sqrt{N}\right)$ is a conclusion of \refNeudecker.
\end{proof}\noindent
Sufficient bounds for the perturbations that have to hold in order to satisfy correct discarding are given in the following theorem. It provides an intuition of the possible magnitude of the perturbations that results in a drop. Note that the bounds are sufficient and not necessary and sufficient.
\begin{Theorem}\label{th:Kahan}
Denote $\tilde{\lambda}_0 = \lambda_0  \equiv \infty$ and $\tilde{\lambda}_{M+1}  = \lambda_{M+1} \equiv -\infty$. For $j\in\{1,\ldots,M\}$ it holds that
$$ \dfrac{2^{3/2} \|\Epsilonb + \Etab_N \|_F }{\min(\lambda_{j-1} - \lambda_j,\lambda_j - \lambda_{j+1}) } < \tau\;\Rightarrow\;  \|\epsilonb_j + \etab_\jN \|_\infty < \tau \; . $$
\end{Theorem}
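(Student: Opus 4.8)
The plan is to recognise the claim as a Davis--Kahan $\sin\Theta$ statement and to reduce it to a standard single--eigenvector perturbation bound. The key observation is that the vector $\epsilonb_j + \etab_\jN$ on the right of the implication is, by the definitions of Section~\ref{s:setup}, exactly the difference $\vbtildehat_j - \vb_j$ between the $\jth$ eigenvector of $\Sigmab$ and the $\jth$ eigenvector of $\Sigmabtildehat = \Sigmab + (\Epsilonb + \Etab_N)$. Consequently the total matrix perturbation is $\Sigmabtildehat - \Sigmab = \Epsilonb + \Etab_N$, and the eigenvalues governing the spectral separation are those of the unperturbed matrix $\Sigmab$, namely $\lambda_{j-1}, \lambda_j, \lambda_{j+1}$, which are precisely the quantities in the denominator.

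With this identification I would invoke the Davis--Kahan theorem for a single eigenvector in the variant whose gap is expressed solely through the eigenvalues of $\Sigmab$ (see \citep{SS90}). Because eigenvectors are determined only up to sign, the bound holds after an appropriate choice of orientation, which is exactly the sign convention implicit in writing $\vbtildehat_j = \vb_j + \epsilonb_j + \etab_\jN$. This $\sin\Theta$ bound contributes a factor $2$ (this is the price of phrasing the separation through $\Sigmab$ alone rather than through both matrices); passing from the angle $\theta_j$ to the Euclidean distance via $\|\vbtildehat_j - \vb_j\|_2^2 = 2(1-\cos\theta_j) \le 2\sin^2\theta_j$, valid once the sign is fixed so that $\cos\theta_j \ge 0$, contributes a further factor $\sqrt{2}$; and $\|\Epsilonb + \Etab_N\|_{\mathrm{op}} \le \|\Epsilonb + \Etab_N\|_F$ replaces the operator norm by the Frobenius norm. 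Together these yield
\[
\| \epsilonb_j + \etab_\jN \|_2 \;=\; \| \vbtildehat_j - \vb_j \|_2 \;\le\; \frac{2^{3/2}\,\|\Epsilonb + \Etab_N\|_F}{\min(\lambda_{j-1}-\lambda_j,\,\lambda_j-\lambda_{j+1})}\,,
\]
which is where the constant $2^{3/2} = 2\sqrt{2}$ originates.

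The remaining step is immediate: since the sup-norm is dominated by the Euclidean norm, $\|\epsilonb_j + \etab_\jN\|_\infty \le \|\epsilonb_j + \etab_\jN\|_2$, so the same bound controls the $\mathcal{L}_\infty$ norm. Hence, whenever the left-hand side of the implication holds, i.e.\ the fraction is strictly below $\tau$, we obtain $\|\epsilonb_j + \etab_\jN\|_\infty < \tau$, which is the assertion. The boundary conventions $\lambda_0 = \lambdatilde_0 = \infty$ and $\lambda_{M+1} = \lambdatilde_{M+1} = -\infty$ render the statement meaningful for $j=1$ and $j=M$ by suppressing the non-existent neighbouring gap; and if the relevant gap is not strictly positive, the hypothesis can never be satisfied, so the implication is vacuously true.

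The main obstacle I anticipate is not the analysis but the bookkeeping around orientation and the exact constant. One must invoke the version of Davis--Kahan that (a) measures the perturbation in Frobenius norm rather than operator norm and (b) uses the two-sided minimum gap $\min(\lambda_{j-1}-\lambda_j,\lambda_j-\lambda_{j+1})$ of $\Sigmab$ rather than the full spectral separation, and then track how the factor $2$ from this variant and the factor $\sqrt{2}$ from the angle-to-distance passage combine to exactly $2^{3/2}$. Once the sign convention of the setup is matched to the one in the theorem, all remaining inequalities are elementary.
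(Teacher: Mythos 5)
Your proof is correct and takes essentially the same route as the paper: the paper simply invokes Corollary 1 of \citep{YW15} (the single-eigenvector Davis--Kahan variant you reconstruct, including the $2^{3/2}$ constant and the relaxation from operator to Frobenius norm) to get $\|\epsilonb_j + \etab_\jN\|_2 \leq 2^{3/2}\|\Epsilonb + \Etab_N\|_F / \min(\lambda_{j-1}-\lambda_j,\lambda_j-\lambda_{j+1})$, and then concludes via $\|\cdot\|_\infty \leq \|\cdot\|_2$ exactly as you do. Your unpacking of where the constant originates is just an explicit derivation of that cited corollary.
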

\begin{proof}
From Corollary 1 in \citep{YW15} we can conclude that $\|\epsilonb_j + \etab_\jN \|_2 \leq  2^{3/2} \|\Epsilonb + \Etab_N \|_F / \min(\lambda_{j-1} - \lambda_j,\lambda_j - \lambda_{j+1})$ which yields our desired result since $\|\epsilonb_j + \etab_\jN \|_\infty \leq \|\epsilonb_j + \etab_\jN\|_2$. 
\end{proof}\noindent
Hence, discarding is ensured if the perturbations are little conditioning on that the eigenvalues are not close. A next step is naturally to investigate what assures discarding in case of large perturbations and what causes close eigenvalues. We provide a brief intuition to answer those questions, however it is subject to future research. We cover the population case with $\Etab_N = \bm{0}$ and consider that $\lambda_{\ell-1}$, $\lambda_\ell$ and $\lambda_{\ell+1}$ are eigenvalues of $\Sigmab_{\overline{\ell}}$, $\Sigmab_{\ell-}$ and $\Sigmab_{\underline{\ell}}$ respectively. If needed, we ensure the validity of subtraction of quadratic matrices with different dimensions by padding the matrix with smaller dimension of concern, say, $\Ab$ in a way that $\Ab \leadsto \begin{pmatrix}
\Ab & \bm{0} \\ \bm{0} & \Deltab
\end{pmatrix}$ with $\Deltab = \diag(\delta,\ldots,\delta)$ and $\delta\to 0$ with $\delta>0$ is small.
\begin{Remark}
$\|\epsilonb_j \|_\infty < \tau$ is satisfied if 
\begin{equation}\label{eq:Remark}
\|\Epsilonb  \|_F < \tau \cdot D_{\overline{l},\underline{l}} 
\end{equation}
where $D_{\overline{\ell},\underline{\ell}}$ is the largest element lying in the Gershgorin discs of $\Sigmab_{\overline{\ell}} -\Sigmab_{\ell-} $ or $\Sigmab_{\ell-} - \Sigmab_{\underline{\ell}}$.
\end{Remark}\noindent
Hence, the more the correlation structure of $\Sigmab_{\ell-}$ differs from the other blocks, the more likely we are to ensure correct dropping.
\section{Decision about Discarding: The Choice of Thresholds}
\label{s:Decision}
\noindent
In this section we take a deeper look at the second step of Algorithm~\ref{alg:PLAcorrBlock}. We recap the idea behind $(\ref{eq:algPLAcorrBlocksExpVar})$ and the choice of the cut-off value $\tau$ respectively. Further, we provide an extension of $(\ref{eq:algPLAcorrBlocksExpVar})$ used in Algorithm~\ref{alg:PLAcorrBlock}.\\
Some steps of PLA correspond with PCA by construction, the choice of optimal thresholds is such an intersection. Therefore, we can utilize established approaches from PCA for this particular step. An elaborate overview of common techniques to evaluate the importance of each principal component (PC) in PCA is provided in \citep{JO02,PNJS05}. In this work, we evaluate the explained variance provided by the blocks we consider to discard and proceed to drop if the explained variance is small. This corresponds to the PCA method when checking the cumulative percentage of total variation explained by the PCs one wants to keep. While the contribution of the PCs is desired to be large in PCA, however, we want the contribution of the blocks in PLA to be small. The threshold in PCA depends on practical details of a particular data set as well as on the intention of the applicant and a rule of thumb is to choose the share of the explained variance by the PCs to be between $70\%-90\%$ \citep{JO02}. We suggest this procedure for PLA to decide in practice if the variables of consideration explain little of the overall variance, that is to say if the set of the reduced variables explains an acceptable amount of the overall variance. \\
The threshold $\tau$ regarding the cut-off of elements vary among applications. \citep{PNJS03} recaps some cut-off values from published studies which lie between $0.3$ and $0.5$. However, in Section~\ref{s:simulationstudy} we discovered that PLA performs well even for smaller thresholds than the ones mentioned above.\\
It is worth mentioning that we can obtain a measure of the contribution to the explained variance by combining both methods. Therefore, we consider again the case to discard $A$ blocks $\Sigmab_{b_1},\ldots,\Sigmab_{b_A}$ with corresponding eigenvectors $\vbtildehat_j$ and eigenvalues $\lambdatildehat_j$ respectively. For convenience purposes, we assume that the contained random variables are $X_j$ with $j\in\{1,\ldots,\kappa_A\}$. The explained variance of $\sum_j \lambdatildehat_j$ is then given by (\ref{eq:algPLAcorrBlocksExpVar}). However, note that $\hat{\tilde{v}}_j^{(k)}$, $k\notin\{1,\ldots,\kappa_A\}$, does not necessarily equal zero due to the perturbation. The eigenvectors are dominated by the random variables we consider to discard, but the remaining random variables also distort to some degree. Further, the elements of the complementary eigenvectors $\hat{\tilde{v}}_k^{(j)}$ are not necessarily equal to zero either. Hence, the $X_j$ might also slightly influence the covariance matrix in further directions. Since the eigenvectors are normalized, we can consider each squared eigenvector element as the portion of the corresponding random variable towards this eigenvector direction. Hence, the measure of the contribution to the explained variance is in spirit of (\ref{eq:algPLAcorrBlocksExpVar}) given by
$$ \left(\sum\limits_m \lambdatildehat_m\right)^{-1}   \left( \sum\limits_{j\in\{1,\ldots,\kappa_A\}}  \lambdatildehat_j \sum\limits_{i=1}^{\kappa_A} (\hat{\tilde{v}}_j^{(i)})^2    +  \sum\limits_{k\notin\{1,\ldots,\kappa_A\}}  \lambdatildehat_k \sum\limits_{i=1}^{\kappa_A}  (\hat{\tilde{v}}_k^{(i)})^2     \right) \; .$$ 
In practice, we can then adjust $\tau$ in order to increase or decrease this explained variance sufficient for application.
\section{Simulation Study}
\label{s:simulationstudy}
\noindent
Finding an optimal threshold $\tau$ theoretically is rather difficult due to the fuzziness of $\tau$. However, we conducted a simulation study where the population $\Xb$ consisting of $M$ variables with $\num{100000}$ realisations was simulated $S=\num{10000}$ times. By construction, $k\in\{1,\ldots,5\}$ variables were uncorrelated or a $\kappa\times\kappa$ block was uncorrelated with $\kappa\in\{2,\ldots,6\}$. Then, for each $S$ a sample $\xb$ of size $N = \num{10000}$ had been drawn and we conducted PLA for $\tau\in\{0.2,0.3,0.4,0.5,$ $0.6,0.7,0.8\}$. For $\tau$, we considered cut-off values used in published studies as an orientation \citep{PNJS03}. However, we also focused on tighter values because PLA appeared to perform well for small thresholds.\footnote{During research we also considered more extreme cut-off values like 0.1 or 0.9. However, PLA performs worse at the tails of $\tau$ which is also indicated by the following results. Hence, we chose $0.2$ and $0.8$ as sufficient borders for this publication.} \\
Since the covariance provided uncorrelated patterns by construction, we were able to calculate the type I errors as the share of iterations where PLA did not lead to a consideration of a drop i.e., when step 1 in Algorithm~\ref{alg:PLAcorrBlock} was not fulfilled. The results can be found in Appendix~\ref{a:Tables} in Table~\ref{tab:alphaN10000} for the variable case and in Table~\ref{tab:alphablockN10000} for the block case respectively. In a type I error sense, PLA performs best for the cut-off values in the center. However, tighter thresholds diminish the likelihood of conducting a type II error. Hence, we recommend to use a threshold of 0.3 when the blocks are of dimension one and a threshold of 0.4 otherwise because they still yield satisfying results while being a tight choice. Note that the explained variance has not been considered in this simulation since we were only interested in the correct detection of uncorrelation. \\
As indicated in (\ref{eq:Remark}), the threshold for $\varepsilon$-uncorrelated blocks depends on many factors and hence the type I errors are hard to simulate. As mentioned previously, we provide a starting point for further research regarding the search for optimal cut-off values in Section~\ref{s:Conclusion}. Still, the given values for $\tau$ do work as we demonstrate in Section~\ref{s:DataAnalysis}.
\section{Example}
\label{s:DataAnalysis}
\noindent
We provide an example of PLA for the case of an uncorrelated block and for the case of a single $\varepsilon$-uncorrelated variable in this section. The examples are based on simulated data sets and all values are rounded to two decimal places. \\
\noindent
In order to conduct PLA on an uncorrelated block, we simulated the population $\Xb$ consisting of $\num{100000}$ realisations of ten variables $X_1,\ldots,X_{10}$ such that $\corr(X_j,X_i) = 0$ for $j\in\{1,2\}$,$\;i\in\{3,\ldots,10\}$ and $\corr(X_1,X_2)\neq0$. The sample $\xb$ with corresponding sample correlation matrix $\Sigmabhat$ was constructed by drawing $\num{10000}$ observations without replacement from $\Xb$. $\Sigmabhat$ can be found in Appendix~\ref{a:Example} with
$$\Vbhat = \begin{pmatrix}
0.00   &   0.06   &  -0.69   &  -0.04   &  -0.01   &   0.01   &   0.00   &  -0.01   &   0.72   &   0.01 \\
0.00   &   0.06   &  -0.72   &  -0.04   &  -0.01   &   0.00   &   0.00   &   0.01   &  -0.69   &   0.00 \\
-0.18   &  -0.11   &  -0.01   &  -0.03   &   0.18   &   0.08   &  -0.50   &   0.79   &   0.02   &  -0.20 \\
-0.37    &  0.05   &   0.05   &  -0.75   &  -0.35   &  -0.31   &   0.00   &  -0.07   &   0.00   &  -0.26 \\
-0.32   &  -0.35   &  -0.02   &   0.03   &  -0.31   &   0.78   &  -0.05   &  -0.20   &   0.00   &  -0.17 \\
-0.45   &   0.53   &   0.04   &   0.20   &  -0.35   &   0.09   &   0.23   &   0.32   &   0.00   &   0.44 \\
-0.23    &  0.17   &   0.00   &   0.26   &  -0.10   &  -0.21   &  -0.77   &  -0.43   &  -0.01   &   0.12 \\
-0.58   &  -0.54   &  -0.07   &   0.31   &   0.18   &  -0.41   &   0.27   &  -0.02   &   0.00   &   0.03 \\
-0.23    &  0.49   &   0.02   &   0.26   &   0.28   &   0.05   &   0.17   &  -0.12   &   0.00   &  -0.72 \\
-0.30   &   0.13   &   0.02   &  -0.40   &   0.71   &   0.27   &  -0.02   &  -0.18   &  -0.01   &   0.37 \\
\end{pmatrix} \; $$
and
$$\diag(\Lambdabhat) = \begin{pmatrix} 154.97  &  26.70  &  23.93  &  19.74  &   9.05  &   6.24  &   2.48  &   1.68  &   0.99  &   0.22  \end{pmatrix} .$$
$\Vbhat$ reveals that $X_1$ and $X_2$ are dominant in $\vbhat_3$ and $\vbhat_9$ however influence the remaining eigenvectors only marginally hence primarily distort $\Sigmab$ in only two directions. We then consider to drop the block containing $X_1$ and $X_2$ denoted $\Sigmab_1$ with $\kappa_1 = 2$ represented by $\vbhat_3$ and $\vbhat_9$. We choose $\tau = 0.4$ following our argument in Section~\ref{s:simulationstudy}.

\begin{enumerate}
\item[1.] \textit{Check if the eigenvectors of $\Sigmab$ satisfy the required structure in (\ref{eq:blockeigenstructure}) to discard $\Sigmab_{1}$.} \\
For $\hat{\vb}_3$ and $\hat{\vb}_9$, it holds that all components in absolute terms are below the given threshold except the first two components.

\item[2.] \textit{Decide if $\Sigmab_{1}$ is relevant according to the explained variance $(\ref{eq:expvar})$ of the realisations $\xb_1$ and $\xb_2$ of their contained random variables $X_1$ and $X_2$.}\\
It holds that $\Sigmabhat_1$ explains $(\lambdahat_3 + \lambdahat_{9})/\sum_m\lambdahat_m \approx 0.1013$ hence $10.13\%$ of the overall variance which is a fairly small amount.

\item[3.] \textit{Discard $\Sigmab_{1}$}.\\
We drop $X_3$ and $X_9$.
\end{enumerate}
\noindent
To provide an example for a single $\varepsilon$-uncorrelated variable, we simulated the population $\Xb$ consisting of $\num{100000}$ realisations of ten variables $X_1,\ldots,X_{10}$ such that $\corr(X_1,X_i) = \textit{small}$ for $i\in\{2,\ldots,10\}$. The sample $\xb$ was constructed by drawing $\num{10000}$ observations without replacement from $\Xb$. The first row of $\Sigmabtilde$ displaying $\Cov(X_1,X_i)$ is
$$\begin{pmatrix} 11.01 & -1.22 &  0.26 & -0.91 &  0.51 &  0.04 &  0.53 & -1.33 & -0.29 & -0.93  \end{pmatrix}$$
and the corresponding sample counterpart is given by
$$\begin{pmatrix} 10.94 & -1.22 &  0.50 & -0.83 &  0.52 &  0.30 &  0.48 & -1.05 & -0.28 & -0.89 \end{pmatrix} .$$
The whole matrices can be found in Appendix~\ref{a:Example}. This example emphasizes the difficulty, whether small values in $\Sigmabtildehat$ are due to uncorrelation or due to $\varepsilon$-uncorrelation. The eigendecomposition of $\Sigmabtildehat$ yields
$$\Vbtildehat = \begin{pmatrix}
 0.00 & 0.00 &  0.10 &  0.10 &  0.94 &  0.29 & -0.12 & -0.04 &  0.01 &  0.03 \\
-0.30 &  0.33 & -0.31 & -0.13 & -0.06 &  0.46 &  0.42 & -0.38 &  0.04 &  0.39 \\
-0.35 & -0.57 &  0.22 &  0.50 & -0.02 & -0.19 &  0.17 & -0.23 &  0.03 &  0.36 \\
-0.38 & -0.01 & -0.69 &  0.24 &  0.13 & -0.19 & -0.01 &  0.37 & -0.35 & -0.11 \\
-0.29 &  0.24 &  0.52 &  0.03 & -0.12 &  0.26  & 0.00  & 0.57  & -0.34  & 0.26 \\
-0.31 & -0.20 &  0.23 & -0.34 &  0.10 & -0.02 &  0.53 & -0.09 & -0.21 & -0.60 \\
-0.30  & 0.58 &  0.16 &  0.35 &  0.07  & -0.34  & 0.11  &-0.01  & 0.49  & -0.22 \\
-0.43 & -0.16 & -0.04 & -0.63 &  0.11 & -0.26 & -0.25 &  0.16 &  0.38 &  0.27 \\
-0.18 &  0.28 &  0.15 & -0.11 &  0.03 & -0.34 & -0.38 & -0.53 & -0.55 &  0.06 \\
-0.39 & -0.14 & -0.02 &  0.15 & -0.24 &  0.52 & -0.52 & -0.15 &  0.16 & -0.39
\end{pmatrix} \; $$
and
$$\diag(\Lambdabtildehat) = \begin{pmatrix} 267.69 &  43.74 &  25.22  & 19.76 &  11.00 &   8.79 &   5.87 &   1.04 &   0.32 &   0.10  \end{pmatrix}.$$
From $\Vbtildehat$ it appears that $X_1$ distorts $\Sigmab$ primarily via $\vb_5$ and hardly influences the other directions. Hence, we consider to drop $X_1$ where we choose $\tau = 0.3$ according to Section~\ref{s:simulationstudy}.
\begin{enumerate}
\item[1.] \textit{Check if the eigenvectors of $\Sigmab$ satisfy the required structure in (\ref{eq:blockeigenstructure}) to discard $X_1$.} \\
It holds that all components except the first one of $\hat{\tilde{\vb}}_5$ are below the given threshold in absolute terms.

\item[2.] \textit{Decide if $X_1$ is relevant according to the explained variance $(\ref{eq:expvar})$ of the realisations $\xb_1$.}\\
It holds that $\xb_1$ explains $\lambdatildehat_5/\sum_m\lambdatildehat_m \approx 0.0287$ hence $2.87\%$ of the overall variance which is a fairly small amount.

\item[3.] \textit{Discard $X_1$}.\\
We drop $X_1$.
\end{enumerate}

\section{Concluding Remarks and Outlook}
\label{s:Conclusion}
\noindent
PLA is a tool for dimension reduction. We have shown the different covariance matrix structures needed to conduct PLA and provided bounds for the sample covariance, sample eigenvectors and sample eigenvalues. Based on our simultation study, we suggest using a cut-off threshold of 0.3 for the eigenvectors to detect if variables or blocks of variables can be discarded.\\
As an extension, we will compare the regression performance of PLA reduced data not only with Ordinary Least Square (OLS) regression but also with PCA regression as well as with PCA-methods reduced regression in an upcoming work. For instance, the natural link between OLS regression and PCA is given since PCA can be seen as a minimization problem in OLS sense:
$$ \min\limits_{\betab} \| \xitilde^{(i)} - \betab^\top\Xb  \|_2  =  \min\limits_{\betab} \|\xitilde^{(i)} - \betab^\top \Vbtilde\xibtilde  \|_2  $$
is minimized for $\betab =\vbtilde_i$ hence the loadings minimize the distance between the subspace spanned by the $K$ PCs and the original space. As an appetizer, we will address the link between OLS and PLA in this outlook however for the special case that $\Sigmabtilde = \Sigmab$ hence with $\Epsilonb = \bm{0}$. In a general regression problem 
$$ \yb = \xb\betab + \bm{u} $$
where $\yb$ denotes the $N\times1$ vector of dependent observations of a random variable $Y$ and $\bm{u}$ denotes a $N\times1$ vector of unobserved random errors, the well known $M\times1$ OLS estimator is given by
$$\hat{\betab}_{\text{OLS}} \equiv (\xb^\top\xb)^{-1}\xb^\top\bm{y} = \Sigmabhat^{-1}\begin{pmatrix} \hat{\Cov}(X_1,Y) \\ \vdots \\ \hat{\Cov}(X_M,Y) \end{pmatrix} \PC \Sigmab^{-1} \begin{pmatrix} \Cov(X_1,Y) \\ \vdots \\ \Cov(X_M,Y) \end{pmatrix}  \; . $$
If we consider a sample $\xb_{\yb} \equiv \begin{pmatrix} \yb & \xb_1 & \cdots & \xb_M \end{pmatrix}$ containing $\xb$ and $\yb$ of a random vector $\Xb_Y \equiv \begin{pmatrix} Y & X_1 & \cdots & X_M \end{pmatrix}$ and assume that, say, $X_1$ is discarded by PLA, it holds that 
$$ \betab_{\text{OLS}} =
\begin{pmatrix}
\tfrac{1}{\sigma_{11}} & \bm{0} &  \\ \bm{0} & \bm{\ast}  
\end{pmatrix}
\begin{pmatrix} 0 \\ \bm{\ast} \end{pmatrix}  =\begin{pmatrix}  0 \\ \bm{\ast} \end{pmatrix} $$
hence the first component of $\betab_{\text{OLS}}$ is zero. With $\Epsilonb \neq \bm{0}$ however, the first component is different from zero which is caused by $\Epsilonb$. To find an optimal threshold subject to $\Epsilonb$, we investigate the regression performance of $\betab_{\text{OLS}}$ depending on $\tau$ and $\Epsilonb$. \\
Further, we consider different mechanics for the choice of $\tau$. This may imply a combination of an upper and lower threshold or a flexible cut-off value depending on the ratio of discarded variables with respect to the overall amount of variables $M$, etc. We also take into account that uncorrelated variables cause zero components in the rows as well as in the columns of $\Vb$ and investigate if checking both, rows and columns, might increase PLA performance. Checking for type II errors corresponding to $\tau$ is also part of this work.\\
One step beyond: instead of choosing a certain cut-off value one could implement a penalty term, such as an elastic net or lasso as a special case, to diminish the absolute value of the loading elements. By construction, this shrinks the loadings towards zero \citep{JC16,ZHT06}.\\
We will also address the case if the dimensionality of $\xb$ is such that $N/M\to\theta\in(0,\infty)$ hence that $N$ is not necessarily larger than $M$ and that $M$ is possibly large in general. We will investigate if PLA is still a feasible tool for dimension reduction.

\appendix

\section{Complementary Results for Section~\ref{s:DataAnalysis}}
\label{a:Example}
\noindent
In the following, we provide complement material for the examples in Section~\ref{s:DataAnalysis}. All values are rounded to two decimal places. We provide $\Sigmabtilde$ for the example of dropping a single block

\subsection{Complementary Results for Section~\ref{s:DataAnalysis}}
\noindent
$$\Sigmabhat=
\begin{pmatrix}
12.06 & 11.46 &  0.01 & -0.08 & -0.12 &  0.11 &  0.08 &  0.12 &  0.20 &  0.10 \\
11.46 & 12.86 & -0.03 & -0.21 & -0.23 & -0.01 &  0.01 & -0.05 &  0.13 &  0.01 \\
0.01 & -0.03 &  7.51 &  9.82 &  9.83 & 10.41 &  6.04 & 17.51 &  4.90 &  9.29 \\
-0.08 & -0.21 &  9.82 & 33.77 & 16.85 & 23.99 & 10.46 & 27.48 &  8.86 & 20.14 \\
-0.1 &2 -0.23 &  9.83 & 16.85 & 24.25 & 18.89 &  9.84 & 31.79 &  6.52 & 12.73 \\
0.11 & -0.01 & 10.41 & 23.99 & 18.89 & 40.59 & 19.17 & 32.93 & 22.72 & 18.64 \\
0.08 &  0.01 &  6.04 & 10.46 &  9.84 & 19.17 & 12.77 & 20.05 & 11.23 &  8.55 \\
0.12 & -0.05 & 17.51 & 27.48 & 31.79 & 32.93 & 20.05 & 63.25 & 15.29 & 22.63 \\
0.20 &  0.13 &  4.90 &  8.86 &  6.52 & 22.72 & 11.23 & 15.29 & 16.67 & 12.04 \\
0.10 &  0.01 &  9.29 & 20.14 & 12.73 & 18.64 &  8.55 & 22.63 & 12.04 & 22.26
\end{pmatrix} 
$$

as well as $\Sigmabtilde$ and $\Sigmabtildehat$ for the example when discarding a single $\varepsilon$-uncorrelated variable respectively.

$$\Sigmabtilde = \begin{pmatrix}
11.01 & -1.22 &  0.26 & -0.91 &  0.51 &  0.04 &  0.53 & -1.33 & -0.29 & -0.93 \\
-1.22 & 34.19 & 16.81 & 33.71 & 23.65 & 22.54 & 29.05 & 32.69 & 15.30 & 30.21 \\
0.26 & 16.81 & 54.58 & 34.41 & 24.94 & 33.69 & 19.44 & 39.46 &  9.99 & 41.10 \\
-0.91 & 33.71 & 34.41 & 51.63 & 20.69 & 26.62 & 29.92 & 42.55 & 15.44 & 39.61 \\
0.51 & 23.65 & 24.94 & 20.69 & 33.63 & 25.59 & 31.17 & 31.54 & 17.77 & 31.17 \\
0.04 & 22.54 & 33.69 & 26.62 & 25.59 & 33.90 & 19.92 & 41.76 & 13.19 & 31.69 \\
0.53 & 29.05 & 19.44 & 29.92 & 31.17 & 19.92 & 42.91 & 27.96 & 22.01 & 27.61 \\
-1.33 & 32.69 & 39.46 & 42.55 & 31.54 & 41.76 & 27.96 & 61.35 & 21.28 & 44.78 \\
-0.29 & 15.30 &  9.99 & 15.44 & 17.77 & 13.19 & 22.01 & 21.28 & 14.93 & 16.50 \\
-0.93 & 30.21 & 41.10 & 39.61 & 31.17 & 31.69 & 27.61 & 44.78 & 16.50 & 47.81
\end{pmatrix} \; , $$

$$ \Sigmabtildehat = \begin{pmatrix}
10.94 & -1.22 &  0.50 & -0.83 &  0.52 &  0.30 &  0.48 & -1.05 & -0.28 & -0.89 \\
-1.22 & 34.20 & 16.60 & 33.46 & 23.53 & 22.17 & 29.01 & 31.95 & 15.04 & 30.01 \\
0.50 & 16.60 & 54.60 & 34.43 & 24.43 & 33.21 & 19.13 & 38.56 &  9.57 & 40.77 \\
-0.83 & 33.46 & 34.43 & 51.43 & 20.24 & 26.09 & 29.62 & 41.52 & 14.96 & 39.28 \\
0.52 & 23.53 & 24.43 & 20.24 & 33.52 & 24.97 & 31.31 & 30.51 & 17.62 & 30.73 \\
0.30  & 22.17 & 33.21 & 26.09 & 24.97 & 33.29 & 19.29 & 40.74 & 12.69 & 31.08 \\
0.48 & 29.01 & 19.13 & 29.62 & 31.31 & 19.29 & 43.60 & 26.90 & 22.07 & 27.27 \\
-1.05 & 31.95 & 38.56 & 41.52 & 30.51 & 40.74 & 26.90 & 59.78 & 20.54 & 43.69 \\
-0.28 & 15.04 &  9.57 & 14.96 & 17.62 & 12.69 & 22.07 & 20.54 & 14.84 & 16.03 \\
-0.89 & 30.01 & 40.77 & 39.28 & 30.73 & 31.08 & 27.27 & 43.69 & 16.03 & 47.33
\end{pmatrix} \; .
$$

\section{Threshold Values}
\label{a:Tables}\noindent
We provide the type I error rates for single uncorrelated variables and for an uncorrelated block respectively. As specified in Section~\ref{s:simulationstudy}, the probabilities are calculated as the share of iterations where PLA did not lead to a consideration of a drop despite it should have been considered by construction.

\begin{ThreePartTable}
\begin{TableNotes}
\footnotesize
\item \textit{Notes:} We computed the type I errors as the share of iterations where the variable has not been discarded.
\end{TableNotes}
\begin{longtable}{rllllllll}
\caption{\label{tab:alphaN10000}Type I error for $k$ uncorrelated variables with sample size $N=\num{10000}$.} \\
\toprule
\multicolumn{9}{c}{$N=\num{10000}$}  \\
\cmidrule(l{3pt}r{3pt}){1-2} \cmidrule(l{3pt}r{3pt}){3-9}
 $M$  & $k$ & $\tau=0.2$ & $\tau=0.3$ & $\tau=0.4$ & $\tau=0.5$ & $\tau=0.6$ & $\tau=0.7$ & $\tau=0.8$\\
\midrule
\endfirsthead
\caption[]{Type I error for $k$ uncorrelated variables with sample size $N=\num{10000}$ \textit{(continued)}.}\\
\toprule
\multicolumn{9}{c}{$N=\num{10000}$}  \\
\cmidrule(l{3pt}r{3pt}){1-2} \cmidrule(l{3pt}r{3pt}){3-9}
 $M$  & $k$ & $\tau=0.2$ & $\tau=0.3$ & $\tau=0.4$ & $\tau=0.5$ & $\tau=0.6$ & $\tau=0.7$ & $\tau=0.8$\\
\midrule
\endhead
\
\endfoot
\bottomrule
\insertTableNotes
\endlastfoot
   10 & 1 &  0.0656 &  0.0306 &  0.0108 &  0.0011 &  0.0000 &  0.0001 &  0.0155\\
   20 & 1 &  0.0919 &  0.0292 &  0.0033 &  0.0000 &  0.0000 &  0.0007 &  0.0274\\
   30 & 1 &  0.0941 &  0.0142 &  0.0004 &  0.0000 &  0.0001 &  0.0020 &  0.0426\\
   40 & 1 &  0.0867 &  0.0088 &  0.0003 &  0.0000 &  0.0000 &  0.0027 &  0.0468\\
   50 & 1 &  0.0782 &  0.0038 &  0.0000 &  0.0000 &  0.0001 &  0.0037 &  0.0623\\
   60 & 1 &  0.0700 &  0.0020 &  0.0000 &  0.0000 &  0.0001 &  0.0053 &  0.0647\\
   70 & 1 &  0.0573 &  0.0007 &  0.0000 &  0.0000 &  0.0001 &  0.0072 &  0.0738\\
   80 & 1 &  0.0451 &  0.0003 &  0.0000 &  0.0000 &  0.0002 &  0.0102 &  0.0849\\
   90 & 1 &  0.0349 &  0.0002 &  0.0000 &  0.0000 &  0.0002 &  0.0136 &  0.0917\\
 100 & 1 &  0.0254 &  0.0001 &  0.0000 &  0.0000 &  0.0006 &  0.0136 &  0.0971 \\
  \addlinespace
   10 & 2 &  0.1290 &  0.0656 &  0.0251 &  0.0099 &  0.0033 &  0.0005 &  0.0279\\
   20 & 2 &  0.1911 &  0.0648 &  0.0153 &  0.0051 &  0.0015 &  0.0019 &  0.0552\\
   30 & 2 &  0.1939 &  0.0436 &  0.0093 &  0.0048 &  0.0020 &  0.0031 &  0.0849\\
   40 & 2 &  0.1912 &  0.0313 &  0.0072 &  0.0043 &  0.0028 &  0.0067 &  0.0987\\
   50 & 2 &  0.1659 &  0.0215 &  0.0066 &  0.0037 &  0.0022 &  0.0085 &  0.1122\\
   60 & 2 &  0.1523 &  0.0143 &  0.0061 &  0.0044 &  0.0028 &  0.0099 &  0.1345\\
   70 & 2 &  0.1241 &  0.0130 &  0.0065 &  0.0048 &  0.0019 &  0.0176 &  0.1441\\
   80 & 2 &  0.1091 &  0.0129 &  0.0087 &  0.0044 &  0.0024 &  0.0223 &  0.1557\\
   90 & 2 &  0.0904 &  0.0121 &  0.0073 &  0.0044 &  0.0036 &  0.0274 &  0.1791\\
 100 & 2 &  0.0768 &  0.0128 &  0.0067 &  0.0040 &  0.0025 &  0.0288 &  0.1893\\
  \addlinespace
   10 & 3 &  0.1997 &  0.1073 &  0.0533 &  0.0184 &  0.0067 &  0.0009 &  0.0411\\
   20 & 3 &  0.2887 &  0.1140 &  0.0332 &  0.0151 &  0.0052 &  0.0023 &  0.0886\\
   30 & 3 &  0.2916 &  0.0869 &  0.0228 &  0.0122 &  0.0069 &  0.0075 &  0.1232\\
   40 & 3 &  0.2845 &  0.0613 &  0.0210 &  0.0118 &  0.0078 &  0.0112 &  0.1457\\
   50 & 3 &  0.2581 &  0.0429 &  0.0245 &  0.0142 &  0.0070 &  0.0146 &  0.1785\\
   60 & 3 &  0.2362 &  0.0414 &  0.0212 &  0.0137 &  0.0062 &  0.0199 &  0.1994\\
   70 & 3 &  0.2086 &  0.0375 &  0.0197 &  0.0127 &  0.0056 &  0.0320 &  0.2212\\
   80 & 3 &  0.1848 &  0.0343 &  0.0192 &  0.0146 &  0.0077 &  0.0329 &  0.2426\\
   90 & 3 &  0.1641 &  0.0350 &  0.0211 &  0.0128 &  0.0059 &  0.0412 &  0.2564\\
 100 & 3 &  0.1366 &  0.0341 &  0.0210 &  0.0124 &  0.0086 &  0.0465 &  0.2741\\
  \addlinespace
   10 & 4 &  0.2485 &  0.1528 &  0.0837 &  0.0355 &  0.0120 &  0.0021 &  0.0475\\
   20 & 4 &  0.3802 &  0.1782 &  0.0608 &  0.0265 &  0.0117 &  0.0068 &  0.1118\\
   30 & 4 &  0.3877 &  0.1337 &  0.0498 &  0.0266 &  0.0121 &  0.0112 &  0.1577\\
   40 & 4 &  0.3901 &  0.0994 &  0.0447 &  0.0261 &  0.0119 &  0.0172 &  0.1936\\
   50 & 4 &  0.3625 &  0.0854 &  0.0403 &  0.0231 &  0.0113 &  0.0239 &  0.2269\\
   60 & 4 &  0.3255 &  0.0737 &  0.0436 &  0.0227 &  0.0118 &  0.0331 &  0.2627\\
   70 & 4 &  0.3041 &  0.0655 &  0.0445 &  0.0297 &  0.0113 &  0.0398 &  0.2870\\
   80 & 4 &  0.2668 &  0.0684 &  0.0403 &  0.0252 &  0.0129 &  0.0483 &  0.3003\\
   90 & 4 &  0.2356 &  0.0674 &  0.0418 &  0.0242 &  0.0147 &  0.0600 &  0.3306\\
 100 & 4 &  0.2187 &  0.0678 &  0.0414 &  0.0261 &  0.0133 &  0.0640 &  0.3502\\
 \addlinespace
   10 & 5 &  0.2897 &  0.1861 &  0.1028 &  0.0543 &  0.0229 &  0.0043 &  0.0594\\
   20 & 5 &  0.4695 &  0.2363 &  0.0966 &  0.0422 &  0.0219 &  0.0093 &  0.1380\\
   30 & 5 &  0.4909 &  0.1910 &  0.0722 &  0.0437 &  0.0182 &  0.0154 &  0.2057\\
   40 & 5 &  0.4807 &  0.1525 &  0.0711 &  0.0457 &  0.0205 &  0.0243 &  0.2431\\
   50 & 5 &  0.4661 &  0.1373 &  0.0724 &  0.0421 &  0.0224 &  0.0316 &  0.2754\\
   60 & 5 &  0.4218 &  0.1197 &  0.0746 &  0.0416 &  0.0231 &  0.0448 &  0.3164\\
   70 & 5 &  0.4037 &  0.1128 &  0.0692 &  0.0425 &  0.0225 &  0.0502 &  0.3439\\
   80 & 5 &  0.3552 &  0.1097 &  0.0644 &  0.0390 &  0.0207 &  0.0637 &  0.3729\\
   90 & 5 &  0.3214 &  0.1081 &  0.0676 &  0.0371 &  0.0193 &  0.0754 &  0.4002\\
 100 & 5 &  0.2912 &  0.1059 &  0.0706 &  0.0391 &  0.0218 &  0.0867 &  0.4150\\*
\end{longtable}
\end{ThreePartTable}

\begin{ThreePartTable}
\begin{TableNotes}
\footnotesize
\item \textit{Notes:} We computed the type I errors as the share of iterations where the block has not been discarded.
\end{TableNotes}
\begin{longtable}{rllllllll}
\caption{\label{tab:alphablockN10000}Type I error for an uncorrelated block containing $\kappa$ random variables with sample size $N=\num{10000}$}\\
\toprule
\multicolumn{9}{c}{$N=\num{10000}$}  \\
\cmidrule(l{3pt}r{3pt}){1-2} \cmidrule(l{3pt}r{3pt}){3-9}
 $M$  & $\kappa$ & $\tau=0.2$ & $\tau=0.3$ & $\tau=0.4$ & $\tau=0.5$ & $\tau=0.6$ & $\tau=0.7$ & $\tau=0.8$\\
\midrule
\endfirsthead
\caption[]{Type I error for an uncorrelated block containing $\kappa$ random variables with sample size $N=\num{10000}$ \textit{(continued)}}\\
\toprule
\multicolumn{9}{c}{$N=\num{10000}$}  \\
\cmidrule(l{3pt}r{3pt}){1-2} \cmidrule(l{3pt}r{3pt}){3-9}
 $M$  & $\kappa$ & $\tau=0.2$ & $\tau=0.3$ & $\tau=0.4$ & $\tau=0.5$ & $\tau=0.6$ & $\tau=0.7$ & $\tau=0.8$\\
\midrule
\endhead
\
\endfoot
\bottomrule
\insertTableNotes
\endlastfoot
       10  &   2  &  0.1279  &  0.0769  &  0.0470  &  0.0237  &  0.0107  &  0.0277  &  0.2722 \\
       20  &   2  &  0.2381  &  0.1483  &  0.0945  &  0.0437  &  0.0240  &  0.0582  &  0.3219 \\
       30  &   2  &  0.3075  &  0.1905  &  0.1201  &  0.0613  &  0.0295  &  0.0827  &  0.3459 \\
       40  &   2  &  0.3609  &  0.2312  &  0.1373  &  0.0692  &  0.0347  &  0.0975  &  0.3718 \\
       50  &   2  &  0.4030  &  0.2500  &  0.1520  &  0.0722  &  0.0416  &  0.1117  &  0.3841 \\
       60  &   2  &  0.4439  &  0.2821  &  0.1781  &  0.0865  &  0.0475  &  0.1169  &  0.3972 \\
       70  &   2  &  0.4611  &  0.3051  &  0.1898  &  0.0902  &  0.0499  &  0.1320  &  0.4079 \\
       80  &   2  &  0.4884  &  0.3198  &  0.1929  &  0.0991  &  0.0541  &  0.1434  &  0.4430 \\
       90  &   2  &  0.5244  &  0.3402  &  0.2022  &  0.0991  &  0.0619  &  0.1579  &  0.4427 \\
      100 &   2  &  0.5346  &  0.3550  &  0.2281  &  0.1048  &  0.0695  &  0.1658  &  0.4578 \\
\addlinespace
       10  &   3  &  0.1279  &  0.0770  &  0.0428  &  0.0189  &  0.0308  &  0.4265  &  0.7814 \\
       20  &   3  &  0.3234  &  0.1979  &  0.1116  &  0.0480  &  0.0845  &  0.4966  &  0.8171 \\
       30  &   3  &  0.4275  &  0.2678  &  0.1429  &  0.0675  &  0.1128  &  0.5333  &  0.8332 \\
       40  &   3  &  0.4978  &  0.3125  &  0.1718  &  0.0759  &  0.1400  &  0.5630  &  0.8475 \\
       50  &   3  &  0.5485  &  0.3539  &  0.1982  &  0.0863  &  0.1641  &  0.5880  &  0.8600 \\
       60  &   3  &  0.5846  &  0.3775  &  0.2209  &  0.1035  &  0.1824  &  0.6031  &  0.8665 \\
       70  &   3  &  0.6218  &  0.4129  &  0.2291  &  0.1052  &  0.1969  &  0.6185  &  0.8800 \\
       80  &   3  &  0.6498  &  0.4327  &  0.2394  &  0.1147  &  0.2096  &  0.6350  &  0.8843 \\
       90  &   3  &  0.6728  &  0.4450  &  0.2585  &  0.1211  &  0.2348  &  0.6426  &  0.8939 \\
      100 &   3  &  0.6960  &  0.4703  &  0.2615  &  0.1304  &  0.2430  &  0.6608  &  0.8982 \\\addlinespace
       10  &   4  &  0.1277  &  0.0781  &  0.0421  &  0.0173  &  0.2243  &  0.8128  &  0.9734 \\
       20  &   4  &  0.3676  &  0.2079  &  0.1057  &  0.0584  &  0.3072  &  0.8463  &  0.9780 \\
       30  &   4  &  0.5009  &  0.2993  &  0.1485  &  0.0908  &  0.3955  &  0.8717  &  0.9851 \\
       40  &   4  &  0.5915  &  0.3557  &  0.1773  &  0.1158  &  0.4410  &  0.8858  &  0.9876 \\
       50  &   4  &  0.6432  &  0.4125  &  0.2040  &  0.1344  &  0.4816  &  0.9049  &  0.9891 \\
       60  &   4  &  0.6851  &  0.4351  &  0.2222  &  0.1577  &  0.5089  &  0.9134  &  0.9921 \\
       70  &   4  &  0.7232  &  0.4778  &  0.2415  &  0.1793  &  0.5479  &  0.9167  &  0.9908 \\
       80  &   4  &  0.7512  &  0.5034  &  0.2488  &  0.1927  &  0.5766  &  0.9283  &  0.9926 \\
       90  &   4  &  0.7746  &  0.5265  &  0.2692  &  0.2145  &  0.6008  &  0.9325  &  0.9941 \\
      100 &   4  &  0.7961  &  0.5375  &  0.2829  &  0.2298  &  0.6239  &  0.9406  &  0.9945 \\
\addlinespace
       10  &   5  &  0.1691  &  0.0946  &  0.0511  &  0.0480  &  0.5810  &  0.9717  &  0.9988 \\
       20  &   5  &  0.3643  &  0.2074  &  0.1013  &  0.0889  &  0.6268  &  0.9740  &  0.9988 \\
       30  &   5  &  0.5535  &  0.3302  &  0.1521  &  0.1690  &  0.7147  &  0.9825  &  0.9991 \\
       40  &   5  &  0.6465  &  0.4036  &  0.1834  &  0.2369  &  0.7694  &  0.9862  &  0.9999 \\
       50  &   5  &  0.7113  &  0.4433  &  0.2144  &  0.2821  &  0.8043  &  0.9887  &  0.9996 \\
       60  &   5  &  0.7666  &  0.4891  &  0.2280  &  0.3216  &  0.8174  &  0.9910  &  0.9998 \\
       70  &   5  &  0.7916  &  0.5189  &  0.2475  &  0.3592  &  0.8494  &  0.9915  &  0.9997 \\
       80  &   5  &  0.8233  &  0.5478  &  0.2691  &  0.3933  &  0.8622  &  0.9926  &  1.0000 \\
       90  &   5  &  0.8424  &  0.5653  &  0.2842  &  0.4164  &  0.8703  &  0.9944  &  0.9996 \\
      100 &   5  &  0.8579  &  0.5896  &  0.2978  &  0.4455  &  0.8881  &  0.9952  &  1.0000 \\
\addlinespace
       10  &   6  &  0.1323  &  0.0757  &  0.0427  &  0.1465  &  0.8519  &  0.9974  &  1.0000 \\
       20  &   6  &  0.3691  &  0.2201  &  0.1003  &  0.1994  &  0.8730  &  0.9980  &  1.0000 \\
       30  &   6  &  0.5590  &  0.3270  &  0.1598  &  0.3095  &  0.9143  &  0.9980  &  1.0000 \\
       40  &   6  &  0.6804  &  0.4204  &  0.1988  &  0.4170  &  0.9370  &  0.9992  &  1.0000 \\
       50  &   6  &  0.7600  &  0.4844  &  0.2401  &  0.4925  &  0.9585  &  0.9998  &  1.0000 \\
       60  &   6  &  0.8019  &  0.5155  &  0.2635  &  0.5508  &  0.9626  &  0.9994  &  1.0000 \\
       70  &   6  &  0.8350  &  0.5481  &  0.2992  &  0.6064  &  0.9695  &  0.9999  &  1.0000 \\
       80  &   6  &  0.8604  &  0.5718  &  0.3295  &  0.6303  &  0.9759  &  0.9999  &  1.0000 \\
       90  &   6  &  0.8832  &  0.5943  &  0.3377  &  0.6744  &  0.9795  &  0.9996  &  1.0000 \\
      100 &   6  &  0.8992  &  0.6150  &  0.3533  &  0.6870  &  0.9820  &  1.0000  &  1.0000 \\*
\end{longtable}
\end{ThreePartTable}

\section*{References}
\bibliography{bib}{}
\bibliographystyle{abbrv} 

\end{document}